\newcommand{\vertex}[3]{\node [vertex] (#1) at (#2, #3 * 1.7) {};}
\newcommand{\Tran}{\mathrm{Tran}}
\newcommand{\Sym}{\mathrm{Sym}}
\newcommand{\id}{\mathrm{id}}
\newcommand{\Char}{\mathrm{char}}
\newcommand{\CA}{\mathrm{CA}}
\newcommand{\ICA}{\mathrm{ICA}}
\newcommand{\LCA}{\mathrm{LCA}}
\newcommand{\End}{\mathrm{End}}
\newcommand{\Sing}{\mathrm{Sing}}
\theoremstyle{plain}
\newtheorem{corollary}{Corollary}
\newtheorem{lemma}{Lemma}
\newtheorem{theorem}{Theorem}
\newtheorem*{claim*}{Claim}
\theoremstyle{definition}
\newtheorem{definition}{Definition}
\newtheorem{example}{Example}
\newtheorem{remark}{Remark}
\begin{document}

\title{Von Neumann Regular Cellular Automata}
\author{Alonso Castillo-Ramirez and Maximilien Gadouleau}

\newcommand{\Addresses}{{
  \bigskip
  \footnotesize

A. Castillo-Ramirez (Corresponding author), \textsc{Universidad de Guadalajara, CUCEI, Departamento de Matem\'aticas, Guadalajara, M\'exico.} \par \nopagebreak
Email: \texttt{alonso.castillor@academicos.udg.mx}

\medskip

M. Gadouleau, \textsc{School of Engineering and Computing Sciences, Durham University, South Road, Durham DH1 3LE, U.K.} \par \nopagebreak
 Email: \texttt{m.r.gadouleau@durham.ac.uk}
}}

\maketitle

\begin{abstract}
For any group $G$ and any set $A$, a cellular automaton (CA) is a transformation of the configuration space $A^G$ defined via a finite memory set and a local function. Let $\CA(G;A)$ be the monoid of all CA over $A^G$. In this paper, we investigate a generalisation of the inverse of a CA from the semigroup-theoretic perspective. An element $\tau \in \CA(G;A)$ is \emph{von Neumann regular} (or simply \emph{regular}) if there exists $\sigma \in \CA(G;A)$ such that $\tau \circ \sigma \circ \tau = \tau$ and $\sigma \circ \tau \circ \sigma = \sigma$, where $\circ$ is the composition of functions. Such an element $\sigma$ is called a \emph{generalised inverse} of $\tau$. The monoid $\CA(G;A)$ itself is regular if all its elements are regular. We establish that $\CA(G;A)$ is regular if and only if $\vert G \vert = 1$ or $\vert A \vert = 1$, and we characterise all regular elements in $\CA(G;A)$ when $G$ and $A$ are both finite. Furthermore, we study regular linear CA when $A= V$ is a vector space over a field $\mathbb{F}$; in particular, we show that every regular linear CA is invertible when $G$ is torsion-free elementary amenable (e.g. when $G=\mathbb{Z}^d, \ d \in \mathbb{N}$) and $V= \mathbb{F}$, and that every linear CA is regular when $V$ is finite-dimensional and $G$ is locally finite with $\Char(\mathbb{F}) \nmid o(g)$ for all $g \in G$.    
 \\
\\
\textbf{Keywords}: Cellular automata, linear cellular automata, monoids, von Neumann regular elements, generalised inverses.
\end{abstract}


\section{Introduction} \label{sec:introduction}

Cellular automata (CA), introduced by John von Neumann and Stanislaw Ulam in the 1940s, are models of computation with important applications to computer science, physics, and theoretical biology. We follow the modern general setting for CA presented in \cite{CSC10}. For any group $G$ and any set $A$, a CA over $G$ and $A$ is a transformation of the configuration space $A^G$ defined via a finite memory set and a local function. Most of the classical literature on CA focus on the case when $G=\mathbb{Z}^d$, for $d\geq1$, and $A$ is a finite set (see \cite{Ka05}), but important results have been obtained for larger classes of groups (e.g., see \cite{CSC10} and references therein).

Recall that a \emph{semigroup} is a set equipped with an associative binary operation, and that a \emph{monoid} is a semigroup with an identity element. Let $\CA(G;A)$ be the set of all CA over $G$ and $A$. It turns out that, equipped with the composition of functions, $\CA(G;A)$ is a monoid. In this paper we apply functions on the right; hence, for $\tau,\sigma \in \CA(G;A)$, the composition $\tau \circ \sigma$, denoted simply by $\tau \sigma$, means applying first $\tau$ and then $\sigma$. 

In general, $\tau \in \CA(G;A)$ is \emph{invertible}, or \emph{reversible}, or \emph{a unit}, if there exists $\sigma \in \CA(G;A)$ such that $\tau \sigma = \sigma \tau = \id$. In such case, $\sigma$ is called \emph{the inverse} of $\tau$ and denoted by $\sigma = \tau^{-1}$. When $A$ is finite, it may be shown that $\tau \in \CA(G;A)$ is invertible if and only if it is a bijective function (see \cite[Theorem 1.10.2]{CSC10}).

We shall consider the notion of \emph{regularity} which, coincidentally, was introduced by John von Neumann in the context of rings, and has been widely studied in semigroup theory (recall that the multiplicative structure of a ring is precisely a semigroup). Intuitively, cellular automaton $\tau \in \CA(G;A)$ is \emph{von Neumann regular} if there exists $\sigma \in \CA(G;A)$ mapping any configuration in the image of $\tau$ to one of its preimages under $\tau$. Clearly, this generalises the notion of reversibility.  

Henceforth, we use the term `regular' to mean `von Neumann regular'. Let $S$ be any semigroup. For $a, b \in S$, we say that $b$ is \emph{a weak generalised inverse} of $a$ if 
\[ aba=a. \]
We say that $b$ is \emph{a generalised inverse} (often just called \emph{an inverse}) of $a$ if
\[ aba = a \text{ and } bab= b. \]
An element $a \in S$ may have none, one, or more (weak) generalised inverses. It is clear that any generalised inverse of $a$ is also a weak generalised inverse; not so obvious is that, given the set $W(a)$ of weak generalised inverses of $a$ we may obtain the set $V(a)$ of generalised inverses of $a$ as follows (see \cite[Exercise 1.9.7]{CP61}):
\[ V(a) = \{ b a b^\prime : b, b^\prime \in W(a)  \}. \]
An element $a \in S$ is \emph{regular} if it has at least one generalised inverse (which is equivalent of having at least one weak generalised inverse). A semigroup $S$ itself is called \emph{regular} if all its elements are regular. Many of the well-known types of semigroups are regular, such as idempotent semigroups (or \emph{bands}), full transformation semigroups, and Rees matrix semigroups. Among various advantages, regular semigroups have a particularly manageable structure which may be studied using the so-called Green's relations. For further basic results on regular semigroups see \cite[Section 1.9]{CP61}. 

Another generalisation of reversible CA has appeared in the literature before \cite{ZZ12,ZZ15} using the concept of \emph{Drazin inverse} \cite{D58}. However, as Drazin invertible elements are a special kind of regular elements, our approach turns out to be more general and natural.  

In the following sections we study the regular elements in monoids of CA. First, in Section \ref{regular} we present some basic results and examples, and we establish that, except for the trivial cases $\vert G \vert = 1$ and $\vert A \vert = 1$, the monoid $\CA(G;A)$ is not regular. In Section \ref{finite}, we study the regular elements of $\CA(G;A)$ when $G$ and $A$ are both finite; in particular, we characterise them and describe a regular submonoid. In Section \ref{linear}, we study the regular elements of the monoid $\LCA(G; V)$ of linear CA, when $V$ is a vector space over a field $\mathbb{F}$. Specifically, using results on group rings, we show that, when $G$ is torsion-free elementary amenable (e.g., $G=\mathbb{Z}^d$), $\tau \in \LCA(G; \mathbb{F})$ is regular if and only if it is invertible, and that, for finite-dimensional $V$, $\LCA(G; V)$ itself is regular if and only if $G$ is locally finite and $\Char(\mathbb{F}) \nmid \vert \langle g \rangle \vert$, for all $g \in G$. Finally, for the particular case when $G \cong \mathbb{Z}_n$ is a cyclic group, $V := \mathbb{F}$ is a finite field, and $\Char(\mathbb{F}) \mid n$, we count the total number of regular elements in $\LCA(\mathbb{Z}_n ;\mathbb{F})$.


\section{Regular cellular automata} \label{regular}

For any set $X$, let $\Tran(X)$, $\Sym(X)$, and $\Sing(X)$, be the sets of all functions, all bijective functions, and all non-bijective (or singular) functions of the form $\tau : X \to X$, respectively. Equipped with the composition of functions, $\Tran(X)$ is known as the \emph{full transformation monoid} on $X$, $\Sym(X)$ is the \emph{symmetric group} on $X$, and $\Sing(X)$ is the \emph{singular transformation semigroup} on $X$. When $X$ is a finite set of size $\alpha$, we simply write $\Tran_\alpha$, $\Sym_\alpha$, and $\Sing_\alpha$, in each case.

We shall review the broad definition of CA that appears in \cite[Sec.~1.4]{CSC10}. Let $G$ be a group and $A$ a set. Denote by $A^G$ the \emph{configuration space}, i.e. the set of all functions of the form $x:G \to A$. For each $g \in G$, denote by $R_g : G \to G$ the right multiplication function, i.e. $(h)R_g := hg$ for any $h \in G$. We emphasise that we apply functions on the right, while \cite{CSC10} applies functions on the left.   

\begin{definition} \label{def:ca}
Let $G$ be a group and $A$ a set. A \emph{cellular automaton} over $G$ and $A$ is a transformation $\tau : A^G \to A^G$ satisfying the following: there is a finite subset $S \subseteq G$, called a \emph{memory set} of $\tau$, and a \emph{local function} $\mu : A^S \to A$ such that
\[ (g)(x)\tau = (( R_g \circ x  )\vert_{S}) \mu, \ \forall x \in A^G, g \in G,  \]
where $(R_g \circ x )\vert_{S}$ is the restriction to $S$ of $(R_g \circ x) : G \to A$ .
\end{definition} 

The group $G$ acts on the configuration space $A^G$ as follows: for each $g \in G$ and $x \in A^G$, the configuration $x \cdot g \in A^G$ is defined by 
\[ (h)x \cdot g := (hg^{-1})x, \quad \forall h \in G. \]
A transformation $\tau : A^G \to A^G$ is \emph{$G$-equivariant} if, for all $x \in A^G$, $g \in G$,
\[ (x \cdot g) \tau = ( (x) \tau ) \cdot g .\] 
Any cellular automaton is $G$-equivariant, but the converse is not true in general. A generalisation of Curtis-Hedlund Theorem (see \cite[Theorem 1.8.1]{CSC10}) establishes that, when $A$ is finite, $\tau : A^G \to A^G$ is a CA if and only if $\tau$ is $G$-equivariant and continuous in the prodiscrete topology of $A^G$; in particular, when $G$ and $A$ are both finite, $G$-equivariance completely characterises CA over $G$ and $A$. 

A configuration $x \in A^G$ is called \emph{constant} if $(g)x = k$, for a fixed $k \in A$, for all $g \in G$. In such case, we denote $x$ by $\mathbf{k} \in A^G$. 

\begin{remark}\label{constant}
It follows by $G$-equivariance that any $\tau \in \CA(G;A)$ maps constant configurations to constant configurations.  
\end{remark}

Recall from Section \ref{sec:introduction} that $\tau \in \CA(G;A)$ is \emph{invertible} if there exists $\sigma \in \CA(G;A)$ such that $\tau \sigma = \sigma \tau = \id$, and that $\tau \in \CA(G;A)$ is \emph{regular} if there exists $\sigma \in \CA(G; A)$ such that $\tau \sigma \tau = \tau$. We now present some examples of CA that are regular but not invertible.

\begin{example}
Let $G$ be any nontrivial group and $A$ any set with at least two elements. Let $\sigma \in \CA(G;A)$ be a CA with memory set $\{s \} \subseteq G$ and local function $\mu : A \to A$ that is non-bijective. Clearly, $\sigma$ is not invertible. As $\Sing(A)$ is a regular semigroup (see \cite[Theorem II]{H66}), there exists $\mu^\prime : A \to A$ such that $\mu \mu^\prime \mu = \mu$. If $\sigma^\prime$ is the CA with memory set $\{ s^{-1} \}$ and local function $\mu^\prime$, then $\sigma \sigma^\prime \sigma = \sigma$. Hence $\sigma$ is regular.
\end{example}

\begin{example}
Suppose that $A = \{0,1, \dots, q-1\}$, with $q \geq 2$. Consider $\tau_1, \tau_2 \in \CA(\mathbb{Z};A)$ with memory set $S:=\{-1,0,1 \}$ and local functions
\[ (x)\mu_1 = \min \{  (-1)x, (0)x, (1)x \} \text{ and } (x)\mu_2 = \max \{ (-1)x, (0)x ,  (1)x \}, \]
respectively, for all $x \in A^S$. Clearly, $\tau_1$ and $\tau_2$ are not invertible, but we show that they are generalised inverses of each other, i.e.  $\tau_1 \tau_2 \tau_1 = \tau_1$ and $\tau_2 \tau_1 \tau_2 = \tau_2$, so they are both regular. We prove only the first of the previous identities, as the second one is symmetrical. Let $x \in A^\mathbb{Z}$, $y := (x)\tau_1$, $z:= (y)\tau_2$, and $a := (z)\tau_1$. We want to show that $y = a$. For all $i \in \mathbb{Z}$ and $\epsilon \in \{ -1,0,1 \}$, we have
\[ (i + \epsilon )y = \min\{ (i + \epsilon - 1) x, (i + \epsilon)x, (i+\epsilon + 1)x \} \leq (i)x.\]
Hence,
\[(i)z = \max\{ (i-1)y, (i)y , (i+1)y \} \leq (i)x. \]
Similarly $(i-1)z \leq (i-1)x$ and $(i+1)z \leq (i+1)x$, so
\[ (i)a = \min\{ (i-1)z, (i)z , (i+1)z  \} \le (i)y = \min \{ (i-1)x, (i)x, (i+1)x \}. \]
Conversely, we have $(i-1)z, (i)z, (i+1)z \ge (i)y$, so $(i)a \ge (i)y$.
In particular, when $q=2$, $\tau_1$ and $\tau_2$ are the elementary CA known as Rules 128 and 254, respectively.
\end{example}

The following lemma gives an equivalent definition of regular CA. Note that this result still holds if we replace $\CA(G;A)$ with any monoid of transformations. 

\begin{lemma}\label{le-regular}
Let $G$ be a group and $A$ a set. Then, $\tau \in \CA(G;A)$ is regular if and only if there exists $\sigma \in \CA(G;A)$ such that for every $y \in (A^G) \tau$ there is $\hat{y} \in A^G$ with $(\hat{y})\tau = y$ and $(y)\sigma = \hat{y}$.
\end{lemma}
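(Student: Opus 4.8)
The plan is to prove both directions directly from the definition of regularity, $\tau\sigma\tau = \tau$, by translating the algebraic identity into a statement about configurations. The key observation is that for a transformation $\tau$ of a set, the equation $\tau\sigma\tau=\tau$ is equivalent to saying that $\sigma$ restricted to the image $(A^G)\tau$ acts as a section of $\tau$, i.e. it picks out a preimage for each element of the image; this is true for arbitrary transformation monoids and uses nothing about the group or the CA structure beyond the fact that composition of CA is composition of functions.

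For the forward direction, I would assume $\tau$ is regular, so there exists $\sigma \in \CA(G;A)$ with $\tau\sigma\tau = \tau$ (a weak generalised inverse suffices here). Given $y \in (A^G)\tau$, write $y = (w)\tau$ for some $w \in A^G$, and set $\hat{y} := (y)\sigma$. Then $(\hat{y})\tau = (y)\sigma\tau = (w)\tau\sigma\tau = (w)\tau = y$, using $\tau\sigma\tau=\tau$. This $\hat y$ satisfies both required conditions by construction, so this direction is immediate.

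For the converse, suppose $\sigma \in \CA(G;A)$ has the stated property: for every $y \in (A^G)\tau$ there is some $\hat y \in A^G$ with $(\hat y)\tau = y$ and $(y)\sigma = \hat y$. I need to check $\tau\sigma\tau = \tau$ as functions on $A^G$. Take any $x \in A^G$ and let $y := (x)\tau \in (A^G)\tau$. By hypothesis $(y)\sigma = \hat y$ with $(\hat y)\tau = y$. Then $(x)\tau\sigma\tau = (y)\sigma\tau = (\hat y)\tau = y = (x)\tau$. Since $x$ was arbitrary, $\tau\sigma\tau = \tau$, so $\tau$ is regular (having a weak generalised inverse is equivalent to being regular, as noted in the introduction, or one can pass from $\sigma$ to $\sigma\tau\sigma$ to get a genuine generalised inverse).

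There is no real obstacle here — the lemma is essentially a reformulation, and the only subtlety worth flagging explicitly is the well-definedness issue: the hypothesis only asserts \emph{existence} of a suitable $\hat y$ for each $y$, not that $(y)\sigma$ is the unique preimage, but since the hypothesis also pins down $(y)\sigma = \hat y$, the chosen preimage is exactly the value of $\sigma$, so there is nothing to reconcile. I would also remark, as the statement already does, that nothing uses the CA structure, so the proof goes through verbatim for any submonoid of $\Tran(A^G)$.
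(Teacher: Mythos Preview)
Your proof is correct and follows essentially the same approach as the paper's own proof: in both directions you translate the identity $\tau\sigma\tau=\tau$ into the pointwise statement about $\sigma$ picking out a preimage $\hat y=(y)\sigma$ for each $y$ in the image, with the same chain of equalities $(x)\tau\sigma\tau=(y)\sigma\tau=(\hat y)\tau=y=(x)\tau$. Your additional remarks on well-definedness and on the argument working for any transformation monoid are consistent with the paper, which makes the latter observation just before stating the lemma.
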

\begin{proof}
If $\tau \in \CA(G;A)$ is regular, there exists $\sigma \in \CA(G;A)$ such that $\tau \sigma \tau = \tau$. Let $x \in A^G$ be such that $(x)\tau = y$ (which exists because $y \in (A^G) \tau$) and define $\hat{y}:= (y)\sigma$. Now,
\[ (\hat{y})\tau = (y)\sigma\tau = (x)\tau\sigma\tau = (x)\tau = y. \]
Conversely, assume there exists $\sigma \in \CA(G;A)$ satisfying the statement of the lemma. Then, for any $x \in A^G$ with $y:=(x)\tau$ we have
\[ (x) \tau \sigma \tau = (y) \sigma \tau = (\hat{y})\tau = y = (x)\tau. \]
Therefore, $\tau$ is regular. 
\end{proof}

\begin{corollary}\label{cor-regular}
Let $G$ be a nontrivial group and $A$ a set with at least two elements. Let $\tau \in \CA(G;A)$, and suppose there is a constant configuration $\mathbf{k} \in (A^G)\tau$ such that there is no constant configuration of $A^G$ mapped to $\mathbf{k}$ under $\tau$. Then $\tau$ is not regular.
\end{corollary}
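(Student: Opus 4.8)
The plan is to argue by contradiction, combining Lemma \ref{le-regular} with Remark \ref{constant}. Suppose, for the sake of contradiction, that $\tau$ is regular. By Lemma \ref{le-regular}, there exists $\sigma \in \CA(G;A)$ such that every $y \in (A^G)\tau$ admits a preimage $\hat{y} \in A^G$ under $\tau$ with $(y)\sigma = \hat{y}$. Since by hypothesis $\mathbf{k} \in (A^G)\tau$, I would apply this to $y = \mathbf{k}$, obtaining the configuration $\hat{\mathbf{k}} := (\mathbf{k})\sigma$, which satisfies $(\hat{\mathbf{k}})\tau = \mathbf{k}$.

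The crucial point is then that $\hat{\mathbf{k}}$ is itself a constant configuration: indeed, $\hat{\mathbf{k}} = (\mathbf{k})\sigma$ is the image of a constant configuration under the cellular automaton $\sigma$, and by Remark \ref{constant} any element of $\CA(G;A)$ maps constant configurations to constant configurations. Thus $\hat{\mathbf{k}}$ is a constant configuration with $(\hat{\mathbf{k}})\tau = \mathbf{k}$, contradicting the assumption that no constant configuration of $A^G$ is mapped to $\mathbf{k}$ under $\tau$. Therefore $\tau$ is not regular.

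I do not anticipate any real obstacle here; the argument is short once Lemma \ref{le-regular} and Remark \ref{constant} are in place. The only points requiring a little care are invoking Lemma \ref{le-regular} in the correct direction (regularity yields such a $\sigma$, not the converse) and observing that the membership $\mathbf{k} \in (A^G)\tau$ is precisely what is needed in order to feed $\mathbf{k}$ into the lemma. The hypotheses $\vert G \vert > 1$ and $\vert A \vert > 1$ are not used in the proof itself; they merely guarantee that the statement is not vacuous, since if $\vert G \vert = 1$ or $\vert A \vert = 1$ every configuration in $A^G$ is constant and no $\mathbf{k}$ as in the statement can exist.
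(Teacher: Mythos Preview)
Your proof is correct and follows exactly the approach indicated in the paper, which simply notes that the result follows from Remark \ref{constant} and Lemma \ref{le-regular}. You have merely unpacked this one-line justification into the explicit contradiction argument.
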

\begin{proof}
The result follows by Remark \ref{constant} and Lemma \ref{le-regular}.  
\end{proof}

In the following examples we see how Corollary \ref{cor-regular} may be used to show that some well-known CA are not regular.

\begin{example}
Let $\phi \in \CA(\mathbb{Z}; \{0,1 \})$ be the Rule 110 elementary CA, and consider the constant configuration $\mathbf{1}$. Define $x := \dots 10101010 \dots \in \{0,1 \}^{\mathbb{Z}}$, and note that $(x)\phi = \mathbf{1}$. Since $(\mathbf{1})\phi = \mathbf{0}$ and $(\mathbf{0})\phi= \mathbf{0}$, Corollary \ref{cor-regular} implies that $\phi$ is not regular. 
\end{example}

\begin{example}
Let $\tau \in \CA(\mathbb{Z}^2; \{0,1 \})$ be Conway's Game of Life, and consider the constant configuration $\mathbf{1}$ (all cells alive). By \cite[Exercise 1.7.]{CSC10}, $\mathbf{1}$ is in the image of $\tau$; since $(\mathbf{1})\tau = \mathbf{0}$ (all cells die from overpopulation) and $(\mathbf{0})\tau = \mathbf{0}$, Corollary \ref{cor-regular} implies that $\tau$ is not regular.
\end{example}

The following theorem applies to CA over arbitrary groups and sets, and it shows that, except for the trivial cases, $\CA(G;A)$ always contains non-regular elements.

\begin{theorem} \label{th:regular}
Let $G$ be a group and $A$ a set. The semigroup $\CA(G;A)$ is regular if and only if $\vert G \vert = 1$ or $\vert A \vert = 1$.
\end{theorem}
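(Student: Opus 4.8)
The plan is to handle the two directions separately. The ``if'' direction is essentially immediate: if $\vert A\vert=1$ then $A^G$ is a singleton and $\CA(G;A)$ is the trivial monoid, which is regular; if $\vert G\vert=1$ then a memory set is just $\{e\}$ (or $\emptyset$) and the local function is an arbitrary map $A\to A$, so $\CA(G;A)$ is isomorphic to the full transformation monoid $\Tran(A)$, and $\Tran(X)$ is regular for every set $X$ (given $f$, choose a preimage for each point of the image and extend $g$ arbitrarily elsewhere to get $fgf=f$; this needs no finiteness). So the content is the ``only if'' direction.

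For the ``only if'' direction I would assume $\vert G\vert\ge 2$ and $\vert A\vert\ge 2$, fix distinct elements $0,1\in A$, and exhibit a single non-regular CA by invoking Corollary \ref{cor-regular}. The idea is to choose a finite memory set $S\subseteq G$ and a local function $\mu:A^S\to A$ that outputs $1$ on every \emph{constant} pattern and $0$ on every \emph{non-constant} pattern. Then every constant configuration is sent to $\mathbf{1}$, so no constant configuration maps to $\mathbf{0}$; and $\mathbf{0}$ lies in the image of the resulting CA $\tau$ as soon as we can produce a configuration $x\in\{0,1\}^G$ all of whose local windows $(R_h\circ x)\vert_S$ are non-constant. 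Corollary \ref{cor-regular} then yields that $\tau$ is not regular, hence $\CA(G;A)$ is not regular. Thus the whole argument reduces to constructing, for a suitable $S$, such a configuration $x$.

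To build $x$ I would pick any $s\in G\setminus\{e\}$ and split on the order of $s$. If $s$ has finite order $n\ge 2$, take $S:=\langle s\rangle$; then the window at any $h$ is exactly the orbit $\langle s\rangle h$ of $h$ under left multiplication by powers of $s$, which has $n\ge 2$ elements, so it suffices to choose $x$ non-constant on each such orbit (pick coset representatives, and on each orbit assign the values $0$ and $1$ to two distinct points, anything elsewhere). If $s$ has infinite order, take $S:=\{e,s\}$; each orbit $\langle s\rangle h$ is a copy of $\mathbb{Z}$, so let $x$ alternate $0,1$ along it, which gives $x(h)\ne x(sh)$ for all $h$. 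In either case every window is non-constant, so $(x)\tau=\mathbf{0}$ while $(\mathbf{k})\tau=\mathbf{1}\ne\mathbf{0}$ for all $k\in A$, as required.

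The main obstacle is precisely this construction of $x$ uniformly over all nontrivial groups. The naïve candidate --- the XOR-with-shift CA with memory $\{e,s\}$ and local function $a\oplus b$ --- fails exactly when every nontrivial element of $G$ has odd finite order (e.g.\ $G=\mathbb{Z}_3$), because odd cycles admit no proper $2$-colouring. Enlarging the memory set to $S=\langle s\rangle$ when $s$ has finite order --- so that each window coincides with a whole orbit and only needs to be non-monochromatic, rather than properly coloured --- is the key point that makes the argument work for every nontrivial $G$; the remaining verifications (well-definedness of $\mu$ and $x$, and that $\tau$ is a genuine CA) are routine.
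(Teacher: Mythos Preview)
Your proof is correct. The ``if'' direction matches the paper's, and for the ``only if'' direction both you and the paper invoke Corollary~\ref{cor-regular} to exhibit a non-regular CA, but the constructions differ.

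The paper fixes a single memory set $S=\{e,g,g^{-1}\}$ (of size at most $3$) and builds the non-regular CA as a \emph{composite} $\tau=\tau_2\tau_1$ of two auxiliary CAs, then checks that a specific configuration $z$ (alternating along $\langle g\rangle$, zero elsewhere) is sent to $\mathbf{0}$ while every constant configuration is sent to something nonzero. You instead use a \emph{single} CA with the transparent local rule ``output $1$ on constant patterns, $0$ otherwise'', and you let the memory set vary with the order of the chosen element: $S=\langle s\rangle$ when $o(s)<\infty$, and $S=\{e,s\}$ when $o(s)=\infty$. Your witness configuration $x$ is then described structurally via the right-coset decomposition $G=\bigsqcup \langle s\rangle h$, which makes the verification that every window is non-constant immediate. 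Your route is arguably cleaner (no composition, and the local rule is self-explanatory); the paper's route has the minor advantage that the memory set stays uniformly bounded in size, independent of the group. Your diagnosis of the obstacle --- that the naive two-element window fails precisely when every nontrivial element has odd finite order, and that enlarging $S$ to the whole cyclic subgroup fixes this --- is exactly the point, and is handled correctly.
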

\begin{proof}
If $\vert G \vert = 1$ or $\vert A \vert = 1$, then $\CA(G;A) = \Tran(A)$ or $\CA(G;A)$ is the trivial semigroup with one element, respectively. In both cases, $\CA(G;A)$ is regular (see \cite[Exercise 1.9.1]{CP61}).

Assume that $\vert G \vert \geq 2$ and $\vert A \vert \geq 2$. Suppose that $\{ 0,1\} \subseteq A$. Let $S := \{e,g,g^{-1}\} \subseteq G$, where $e$ is the identity of $G$ and $e \neq g \in G$ (we do not require $g \neq g^{-1}$). For $i =1,2$, let $\tau_i \in \CA(G;A)$ be the cellular automaton defined by the local function $\mu_i : A^S \to A$, where, for any $x \in A^S$,
\begin{align*}
(x)\mu_1   &:=  \begin{cases}
(e)x & \text{if } (e)x = (g)x = (g^{-1})x , \\
0 & \text{otherwise};
\end{cases}  \\
(x) \mu_2 &:= \begin{cases}
1 & \text{if } (e)x = (g)x = (g^{-1})x= 0 , \\
(e)x & \text{otherwise}.
\end{cases} 
\end{align*}
We shall show that $\tau := \tau_2 \tau_1 \in \CA(G;A)$ is not regular.

Consider the constant configurations $\mathbf{0}, \mathbf{1} \in A^G$. Let $z \in A^G$ be defined by
\[ (h)z := \begin{cases}
m \mod(2) & \text{if } h=g^m, m \in \mathbb{N} \text{ minimal}, \\
0 & \text{ otherwise}. 
\end{cases} \]   

\begin{figure}[h]
\centering
\begin{tikzpicture}[vertex/.style={circle, draw, fill=none, inner sep=0.2cm}]
    \vertex{1}{1}{2}    \node at (1,3.4) {$z$};  
    \vertex{2}{3}{1}    \node at (3,1.7) {$\mathbf{k}$};   
   \vertex{3}{1}{1}    \node at (1,1.7) {$\mathbf{0}$};  
   \vertex{4}{1}{0}   \node at (1,0) {$\mathbf{1}$};   

	\path[->,every loop/.style={min distance=6mm,in=65,out=120,looseness=5}] (1) edge [loop above] node[pos=.5,above] {\small{$\tau_2$}} () ;
	\path[->,every loop/.style={min distance=6mm,in=65,out=120,looseness=5}] (2) edge [loop above] node[pos=.4,above] {\small{\ \ $\tau_1,\tau_2$}} () ;
	\path[->,every loop/.style={min distance=6mm,in=210,out=155,looseness=5}] (3) edge [loop above] node[pos=.5,left] {\small{$\tau_1$}} () ;
	\path[->,every loop/.style={min distance=6mm,in=-30,out=25,looseness=5}] (4) edge [loop above] node[pos=.5,right] {\small{$\tau_1, \tau_2$}} () ;

	\draw[->] (1) -- (3) node[pos=.5,right] {\small{$\tau_1$}}; 
	\draw[->] (3) -- (4) node[pos=.5,right] {\small{$\tau_2$}};

   \end{tikzpicture} 
\caption{Images of $\tau_1$ and $\tau_2$.}
\label{Fig1}
   \end{figure}
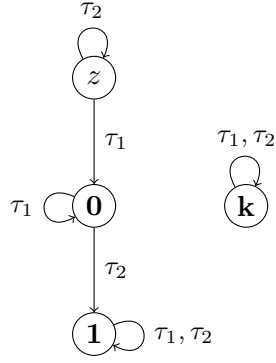

Figure \ref{Fig1} illustrates the images $z$, $\textbf{0}$, $\textbf{1}$, and $\mathbf{k} \neq \textbf{0}, \textbf{1}$ (in case it exists) under $\tau_1$ and $\tau_2$. Clearly,
\[ (\mathbf{0})\tau = ( \mathbf{0})\tau_2 \tau_1 = (\mathbf{1})\tau_1 = \mathbf{1}. \]
In fact, 
 \[ (\mathbf{k})\tau = \begin{cases}
 \mathbf{1} & \text{if } \mathbf{k}=\mathbf{0},  \\
 \mathbf{k} & \text{otherwise}. 
  \end{cases} \] 
Furthermore,  
\[ (z)\tau = (z) \tau_2 \tau_1 = (z)\tau_1 = \textbf{0}. \]
Hence, $\mathbf{0}$ is a constant configuration in the image of $\tau$ but with no preimage among the constant configurations. By Corollary \ref{cor-regular}, $\tau$ is not regular. 
\end{proof}

Now that we know that $\CA(G;A)$ always contains both regular and non-regular elements (when $\vert G \vert \geq 2$ and $\vert A \vert \geq 2$), an interesting problem is to find a criterion that describes all regular CA. In the following sections, we solve this problem by adding some extra assumptions, such as finiteness and linearity. 


\section{Regular finite cellular automata} \label{finite}

In this section we characterise the regular elements in the monoid $\CA(G;A)$ when $G$ and $A$ are both finite (Theorem \ref{characterisation}). In order to achieve this, we summarise some of the notation and results obtained in \cite{CRG16a,CRG16b,CRG16c}.

\begin{definition} The following definitions apply for an arbitrary group $G$ and an arbitrary set $A$:
\begin{enumerate}
\item For any $x \in A^G$, the \emph{$G$-orbit} of $x$ in $A^G$ is $xG := \{ x \cdot g : g \in G  \}$.
\item For any $x \in A^G$, the \emph{stabiliser} of $x$ in $G$ is $G_x := \{ g \in G : x \cdot g = x \}$.
 \item A \emph{subshift} of $A^G$ is a subset $X \subseteq A^G$ that is \emph{$G$-invariant}, i.e. for all $x \in X$, $g \in G$, we have $x \cdot g \in X$, and closed in the prodiscrete topology of $A^G$. 
\item The \emph{group of  invertible cellular automata} over $G$ and $A$ is
\[ \ICA(G;A) := \{ \tau \in \CA(G;A) : \exists \phi \in \CA(G;A) \text{ such that } \tau \phi = \phi \tau = \id \}. \]
\end{enumerate}
\end{definition}

In the case when $G$ and $A$ are both finite, every subset of $A^G$ is closed in the prodiscrete topology, so the subshifts of $A^G$ are simply unions of $G$-orbits. Moreover, as every map $\tau : A^G \to A^G$ is continuous in this case, $\CA(G;A)$ consists of all the $G$-equivariant maps of $A^G$. Theorem \ref{conjugate} is easily deduced from Lemmas 3, 9 and 10 in \cite{CRG16c}.

If $M$ is a group, or a monoid, write $K \leq M$ if $K$ is a subgroup, or a submonoid, of $M$, respectively.

\begin{theorem}\label{conjugate}
Let $G$ be a finite group of size $n \geq 2$ and $A$ a finite set of size $q \geq 2$. Let $x,y \in A^G$.
\begin{description}
\item[(i)] Let $\tau \in \CA(G;A)$. If $(x)\tau \in (xG)$, then $\tau \vert_{xG} \in \Sym(xG)$.
\item[(ii)] There exists $\tau \in \ICA(G;A)$ such that $(x)\tau = y$ if and only if $G_x = G_y$. 
\item[(iii)] There exists $\tau \in \CA(G;A)$ such that $(x) \tau = y$ if and only if $G_x \leq G_y$.
 \end{description} 
\end{theorem}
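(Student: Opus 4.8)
The plan is to derive all three parts directly from two facts already recalled in the excerpt: when $G$ and $A$ are finite the elements of $\CA(G;A)$ are exactly the $G$-equivariant self-maps of $A^G$, and a CA is invertible if and only if it is bijective. The ``only if'' implications will fall straight out of equivariance; the ``if'' implications will be explicit constructions of equivariant maps with prescribed behaviour on the orbit $xG$, where the point is that the stabiliser inclusion $G_x \leq G_y$ (respectively the equality $G_x = G_y$) is precisely the condition making such a construction well defined on $xG$.

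For \textbf{(i)}, equivariance gives $(xG)\tau = \{(x\cdot g)\tau : g\in G\} = \{((x)\tau)\cdot g : g\in G\} = ((x)\tau)G$, and since $(x)\tau \in xG$ these two $G$-orbits coincide; thus $\tau$ maps the finite set $xG$ onto itself, hence bijectively, so $\tau|_{xG}\in\Sym(xG)$. For the ``only if'' of \textbf{(iii)}: if $(x)\tau=y$ and $g\in G_x$ then $y\cdot g = ((x)\tau)\cdot g = (x\cdot g)\tau = (x)\tau = y$, so $G_x\leq G_y$; applying this to both $\tau$ and $\tau^{-1}$ (using $(y)\tau^{-1}=x$) gives the ``only if'' of \textbf{(ii)}.

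For the ``if'' of \textbf{(iii)}, assume $G_x\leq G_y$. The rule $x\cdot g\mapsto y\cdot g$ is well defined on $xG$ (if $x\cdot g = x\cdot g'$ then $g'g^{-1}\in G_x\leq G_y$, so $y\cdot g = y\cdot g'$) and is clearly equivariant; defining $\tau$ on every other $G$-orbit by sending it constantly to a fixed $\mathbf{k}\in A^G$ produces an equivariant, hence cellular, map with $(x)\tau=y$. For the ``if'' of \textbf{(ii)}, assume $G_x=G_y$. If $xG$ and $yG$ are disjoint, let $\tau$ interchange them via the mutually inverse equivariant bijections $x\cdot g\mapsto y\cdot g$ and $y\cdot g\mapsto x\cdot g$ and act as the identity on every other orbit; then $\tau$ is an equivariant permutation of $A^G$, hence an invertible CA, with $(x)\tau=y$. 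If instead $xG=yG$, write $y=x\cdot g_0$; a stabiliser computation gives $G_y = g_0^{-1}G_x g_0$, so $G_x=G_y$ forces $g_0\in N_G(G_x)$, and then $x\cdot g\mapsto x\cdot g_0 g$ is well defined on $xG$ (since $g'g^{-1}\in G_x$ implies $g_0 g'g^{-1} g_0^{-1}\in g_0 G_x g_0^{-1}=G_x$), equivariant and bijective, and sends $x$ to $y$; extending by the identity on the remaining orbits finishes the argument.

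The only step that is not a one-line verification is this last case of \textbf{(ii)}: one must notice that the hypothesis $G_x=G_y$ is exactly what places the translating element $g_0$ in the normaliser $N_G(G_x)$, which is in turn exactly what is needed for the translation $x\cdot g\mapsto x\cdot g_0 g$ to descend to the orbit $xG$. I expect this to be the main (and fairly mild) obstacle; everything else is routine bookkeeping with orbits, the orbit--stabiliser correspondence, and equivariance.
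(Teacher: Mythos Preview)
Your proof is correct. The paper itself does not prove this theorem at all: it simply states that it ``is easily deduced from Lemmas 3, 9 and 10 in \cite{CRG16c}'' and moves on. Your argument supplies exactly the direct orbit--stabiliser verification one would expect those lemmas to contain, so in effect you have written out what the paper only cites.

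One small point worth making explicit: in the ``if'' direction of (iii) you send every orbit other than $xG$ to ``a fixed $\mathbf{k}\in A^G$''. For the resulting map to be $G$-equivariant this $\mathbf{k}$ must be a constant configuration (so that $G_{\mathbf{k}}=G$); your notation already suggests this, but it is the one place a reader might pause. Alternatively, taking $\tau$ to be the identity on the remaining orbits avoids the issue entirely and matches what you do in (ii). Everything else---including the normaliser observation in the same-orbit case of (ii), which you rightly flag as the only non-mechanical step---is clean and correct.
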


\begin{theorem}\label{characterisation}
Let $G$ be a finite group and $A$ a finite set of size $q\geq 2$. Let $\tau \in \CA(G;A)$. Then, $\tau$ is regular if and only if for every $y \in (A^G)\tau$ there is $x \in A^G$ such that $(x)\tau = y$ and $G_x = G_y$.
\end{theorem}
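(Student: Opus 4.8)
The plan is to prove both directions using the equivalent characterisation of regularity from Lemma~\ref{le-regular} together with the orbit-structure results in Theorem~\ref{conjugate}. Since $G$ and $A$ are finite, $\CA(G;A)$ consists exactly of the $G$-equivariant maps of $A^G$, and $A^G$ decomposes as a disjoint union of finite $G$-orbits. The key structural fact is Theorem~\ref{conjugate}(ii): for $x,y \in A^G$ there is an \emph{invertible} CA carrying $x$ to $y$ precisely when $G_x = G_y$, whereas Theorem~\ref{conjugate}(iii) only needs $G_x \le G_y$ for a (possibly non-invertible) CA. So the condition ``$G_x = G_y$'' is exactly the obstruction to choosing a preimage that an inverse CA $\sigma$ is allowed to hit.

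For the forward direction, suppose $\tau$ is regular, so by Lemma~\ref{le-regular} there is $\sigma \in \CA(G;A)$ such that every $y \in (A^G)\tau$ has a preimage $\hat{y} := (y)\sigma$ with $(\hat y)\tau = y$. I would set $x := \hat y = (y)\sigma$ and argue $G_x = G_y$. One inclusion is immediate: if $g \in G_x$ then $x \cdot g = x$, and applying the $G$-equivariant $\tau$ gives $y \cdot g = (x \cdot g)\tau = (x)\tau = y$, so $G_x \le G_y$; this also just re-derives Theorem~\ref{conjugate}(iii). For the reverse inclusion, note $y = (x)\sigma\tau$... actually more carefully: $y \in (A^G)\tau$ means $y = (x')\tau$ for some $x'$, and $\hat y = (y)\sigma$ satisfies $(\hat y)\tau = y$; since $\sigma$ is $G$-equivariant, $g \in G_y$ gives $\hat y \cdot g = (y \cdot g)\sigma = (y)\sigma = \hat y$, hence $G_y \le G_{\hat y} = G_x$. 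Combining, $G_x = G_y$, which is exactly the claimed condition with $x = \hat y$.

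For the converse, assume the condition: every $y \in (A^G)\tau$ admits $x_y \in A^G$ with $(x_y)\tau = y$ and $G_{x_y} = G_y$. I need to build a single $\sigma \in \CA(G;A)$ realising all these choices consistently, i.e. a $G$-equivariant map sending each $y$ in the image to some such $x_y$. The natural construction is orbit-by-orbit: the image $(A^G)\tau$ is a union of $G$-orbits; pick one orbit representative $y$ from each orbit in the image, choose $x_y$ with $(x_y)\tau = y$ and $G_{x_y} = G_y$, and then \emph{extend equivariantly} by declaring $(y \cdot g)\sigma := x_y \cdot g$. The well-definedness of this extension is precisely where $G_{x_y} = G_y$ is used: if $y \cdot g = y \cdot g'$ then $g'g^{-1} \in G_y = G_{x_y}$, so $x_y \cdot g = x_y \cdot g'$. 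On orbits not meeting the image, define $\sigma$ arbitrarily (say the identity), or better, extend to all of $A^G$ using Theorem~\ref{conjugate}(iii) applied to each orbit; one must just check $\sigma$ so defined is $G$-equivariant on all of $A^G$, which it is by construction. Then by Lemma~\ref{le-regular}, $\tau$ is regular.

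The main obstacle I anticipate is the converse: one has to be careful that $\sigma$ can be defined as a \emph{global} $G$-equivariant map on all of $A^G$ (not just on the image of $\tau$), and that the orbit-wise choices patch together. Using the disjointness of distinct $G$-orbits and the fact that each orbit is itself $G$-invariant, defining $\sigma$ independently on each orbit and taking the union yields a well-defined $G$-equivariant transformation, hence an element of $\CA(G;A)$ by the finite Curtis--Hedlund theorem. The only genuine content beyond bookkeeping is the well-definedness check using $G_{x_y} = G_y$, which is the same idea underlying Theorem~\ref{conjugate}(ii); everything else is routine. I would also remark that this argument makes transparent why regularity is strictly between ``$G_x \le G_y$ for some preimage'' (always true, by Theorem~\ref{conjugate}(iii), for $y$ in the image) and invertibility.
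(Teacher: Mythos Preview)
Your proposal is correct and follows essentially the same approach as the paper: both directions rely on Lemma~\ref{le-regular} and the orbit/stabiliser facts behind Theorem~\ref{conjugate}, and for the converse you build the weak inverse orbit-by-orbit exactly as the paper does (identity off the image, equivariant extension on each image orbit, well-definedness via $G_{x_y}=G_y$). The only cosmetic differences are that you spell out the two stabiliser inclusions directly from $G$-equivariance of $\tau$ and $\sigma$ rather than citing Theorem~\ref{conjugate}(iii) twice, and you invoke Lemma~\ref{le-regular} at the end instead of computing $\tau\sigma\tau=\tau$ explicitly.
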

\begin{proof}
First, suppose that $\tau$ is regular. By Lemma \ref{le-regular}, there exists $\phi \in \CA(G;A)$ such that for every $y \in (A^G) \tau$ there is $\hat{y} \in A^G$ with $(\hat{y})\tau = y$ and $(y)\phi = \hat{y}$. Take $x := \hat{y}$. By Theorem \ref{conjugate}, $G_x \leq G_y$ and $G_y \leq G_x$. Therefore, $G_x = G_y$. 

Conversely, suppose that for every $y \in (A^G)\tau$ there is $x \in A^G$ such that $(x)\tau = y$ and $G_x = G_y$. Choose pairwise distinct $G$-orbits $y_1G, \dots, y_\ell G$ such that  
\[ (A^G)\tau = \bigcup_{i=1}^{\ell} y_i G. \]
For each $i$, fix $y_i^\prime \in A^G$ such that $(y_i^\prime) \tau =  y_i$ and $G_{y_i}= G_{y_i^\prime}$. We define $\phi : A^G \to A^G$ as follows: for any $z \in A^G$,
\[ (z)\phi := \begin{cases}
z & \text{if } z \not \in (A^G)\tau, \\
y_i^\prime \cdot g & \text{if } z = y_i \cdot g \in y_i G.
 \end{cases} \]
The map $\phi$ is well-defined because
\[ y_i \cdot g = y_i \cdot h \ \Longleftrightarrow \ hg^{-1} \in G_{y_i} = G_{y_i^\prime} \ \Longleftrightarrow  \ y_i^\prime \cdot g = y_i^\prime \cdot h.  \]
Clearly, $\phi$ is $G$-equivariant, so $\phi \in \CA(G;A)$. Now, for any $x \in A^G$ with $(x)\tau = y_i \cdot g$, 
\[ (x) \tau \phi \tau = (y_i \cdot g) \phi \tau = (y_i^\prime \cdot g) \tau = (y_i^\prime) \tau \cdot g =  y_i \cdot g = (x) \tau.  \]
This proves that $\tau \phi \tau = \tau$, so $\tau$ is regular. 
\end{proof}

Our goal now is to find a regular submonoid of $\CA(G;A)$ and describe its structure (see Theorem \ref{le:max regular}). In order to achieve this, we need some further terminology and basic results. 

Say that two subgroups $H_1$ and $H_2$ of $G$ are \emph{conjugate} in $G$ if there exists $g \in G$ such that $g^{-1} H_1 g = H_2$. This defines an equivalence relation on the subgroups of $G$. Denote by $[H]$ the conjugacy class of $H \leq G$.
Define the \emph{box} in $A^G$ corresponding to $[H]$, where $H \leq G$, by 
\[ B_{[H]}(G;A) := \{ x \in A^G : [G_x] =  [H] \}. \]
As any subgroup of $G$ is the stabiliser of some configuration in $A^G$, the set $\{ B_{[H]}(G;A) : H \leq G \}$ is a partition of $A^G$. Note that $B_{[H]}(G;A)$ is a subshift of $A^G$ (because $G_{(x \cdot g)} = g^{-1} G_x g$) and, by the Orbit-Stabiliser Theorem, all the $G$-orbits contained in $B_{[H]}(G;A)$ have equal sizes. When $G$ and $A$ are clear from the context, we write simply $B_{[H]}$ instead of $B_{[H]}(G;A)$. 

\begin{example}
For any finite group $G$ and finite set $A$ of size $q$, we have 
\[ B_{[G]} = \{ \mathbf{k} \in A^G : \mathbf{k} \text{ is constant} \}. \]
\end{example}

For any subshift $C \subseteq A^G$, define
\[ \CA(C) := \{ \tau \in \Tran(C) : \tau \text{ is $G$-equivariant} \}. \]  
In particular, $\CA(A^G) = \CA(G; A)$. Clearly, 
\[ \CA(C) = \{ \tau \vert_C : \tau \in \CA(G;A), \tau(C) \subseteq C \}. \]

A submonoid $R \leq M$ is called \emph{maximal regular} if there is no regular monoid $K$ such that $R < K < M$.  

\begin{theorem} \label{le:max regular}
Let $G$ be a finite group and $A$ a finite set of size $q\geq 2$. Let
\[ R := \left\{ \sigma \in \CA(G;A) : G_x = G_{(x)\sigma} \text{ for all } x \in A^G  \right\}. \]
\begin{description}
\item[(i)] $\ICA(G;A) \leq R$.
\item[(ii)] $R$ is a regular monoid. 
\item[(iii)] $R \cong \prod_{H \leq G} \CA(B_{[H]})$.
\item[(iv)] $R$ is not a maximal regular submonoid of $\CA(G;A)$.
\end{description}
\end{theorem}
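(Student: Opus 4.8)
The plan is to handle the four parts essentially in the order stated, since each builds on the previous one. For part (i), I would note that if $\tau \in \ICA(G;A)$ is invertible, then $\tau$ restricted to each $G$-orbit is a bijection onto its image orbit (Theorem \ref{conjugate}(i) applies, or more directly $\tau$ is a bijection of $A^G$), and by Theorem \ref{conjugate}(ii) a configuration and its image under an invertible CA have the same stabiliser; hence $G_x = G_{(x)\tau}$ for all $x$, so $\tau \in R$. I also need $R$ to be a submonoid: it clearly contains $\id$, and if $\sigma,\sigma' \in R$ then $G_x = G_{(x)\sigma} = G_{(x)\sigma\sigma'}$, so $R$ is closed under composition.

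For part (iii), the key observation is that every $\sigma \in R$ preserves each box $B_{[H]}$: since $G_{(x)\sigma} = G_x$, if $[G_x] = [H]$ then $[G_{(x)\sigma}] = [H]$ as well, so $(B_{[H]})\sigma \subseteq B_{[H]}$. Because $\{B_{[H]} : H \le G\}$ partitions $A^G$ into subshifts, restricting a given $\sigma \in R$ to each box yields an element of $\CA(B_{[H]})$, and this gives a monoid homomorphism $R \to \prod_{H \le G} \CA(B_{[H]})$. Conversely, any tuple of $G$-equivariant maps on the boxes glues to a $G$-equivariant map of $A^G$ (the boxes are disjoint and $G$-invariant), and this map lies in $R$ precisely because on $B_{[H]}$ every $G$-orbit has the same stabiliser class and — crucially — within a single orbit $xG$ a $G$-equivariant self-map of that orbit sends $x$ to some $x \cdot g$, which has stabiliser $g^{-1}G_x g$; so I must be slightly careful: membership in $R$ requires $G_{(x)\sigma} = G_x$ on the nose, not just up to conjugacy. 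I would resolve this by observing that a $G$-equivariant map from orbit $xG$ into $B_{[H]}$ automatically lands in an orbit whose stabiliser is conjugate to $G_x$, but to be in $R$ we need it to preserve $G_x$ exactly; fortunately $\CA(B_{[H]})$ as defined only requires $G$-equivariance of the restriction, and a $G$-equivariant map $B_{[H]} \to B_{[H]}$ need not preserve stabilisers exactly. So the correct statement is that $R$ corresponds exactly to those tuples whose components additionally satisfy $G_{(x)\tau} = G_x$; I should check whether $\CA(B_{[H]})$ is meant to carry that constraint, and if the isomorphism as literally stated requires reinterpreting $\CA(B_{[H]})$ — this is the point I would scrutinise most carefully, and I suspect the intended reading is that every $G$-equivariant self-map of $B_{[H]}$ does preserve stabilisers exactly because all orbits in $B_{[H]}$ have stabilisers in the single conjugacy class $[H]$ and a $G$-equivariant map between two orbits with conjugate stabilisers, composed appropriately, can be normalised; I will need to verify the precise lemma from \cite{CRG16c} that underlies this.

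Part (ii) then follows from part (iii): a direct product of regular monoids is regular, and each $\CA(B_{[H]})$ is regular. The latter holds because $\CA(B_{[H]})$ is a transformation monoid on a disjoint union of equal-size $G$-orbits all having the same stabiliser, and by Theorem \ref{conjugate}(ii) any two such orbits are "interchangeable" via invertible CA; concretely $\CA(B_{[H]})$ is isomorphic to something like a wreath-type product $\Tran_m \wr \text{(something)}$ or, more simply, one can apply Theorem \ref{characterisation} within $B_{[H]}$: given $\tau \in \CA(B_{[H]})$ and $y$ in its image, the orbit $yG$ has a preimage orbit $xG$, and since both are in $B_{[H]}$ we have $[G_x]=[G_y]$, and after adjusting by a suitable $g$ we get $G_x = G_y$, so the construction in the proof of Theorem \ref{characterisation} produces a generalised inverse inside $\CA(B_{[H]})$. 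Finally, for part (iv), I need to exhibit a regular monoid strictly between $R$ and $\CA(G;A)$. The natural candidate is $\{\tau \in \CA(G;A) : \tau|_{xG} \text{ is injective for all orbits } xG \text{ that } \tau \text{ maps into their own box}\}$ — more precisely, the monoid of all $\tau$ whose restriction to every $G$-orbit is injective (equivalently $G_x \le G_{(x)\tau}$ always, with $\tau$ permuting within $B_{[H]}$ is not required). I would take $R'$ to be the set of $\tau \in \CA(G;A)$ such that for every orbit $xG$, $|xG\cdot\tau| = |xG|$, i.e. $\tau$ is injective on each orbit; this contains $R$ (where it is even bijective on each orbit image) and is strictly larger (a CA collapsing one orbit onto a smaller-stabiliser orbit injectively is in $R'$ but not $R$, as in Theorem \ref{conjugate}(iii) with $G_x < G_y$ — wait, that reverses, so instead: a CA sending an orbit with trivial stabiliser injectively onto another orbit with trivial stabiliser but in a different box is impossible since same stabiliser means same box; the right example uses $G_x \lneq G_{(x)\tau}$, which makes $\tau$ non-injective on $xG$, so that is not in $R'$ either — I must think again). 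The cleanest choice: let $R'$ consist of all $\tau$ for which $\tau|_{B_{[H]}}$ is a permutation of $B_{[H]}$ for the *constant* box $B_{[G]}$ replaced by allowing $\tau$ on $B_{[G]} = \{\text{constants}\} \cong A$ to be an arbitrary element of $\Tran(A)$ while still requiring stabiliser preservation elsewhere; then $R < R' < \CA(G;A)$ and $R'$ is regular because $\Tran(A)$ is regular and the rest is $R$ on the complement — but the complement of $B_{[G]}$ is a subshift, so $R' \cong \Tran(A) \times \prod_{H < G}\CA(B_{[H]})$, again a product of regular monoids. This shows $R$ is not maximal regular, and I expect verifying the strict inclusions and regularity of this $R'$ to be routine once part (iii)'s machinery is in place; the genuine obstacle throughout is pinning down the exact structure statement in (iii) and the precise orbit-normalisation lemma it rests on.
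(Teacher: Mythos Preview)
Your treatment of parts (i)--(iii) is broadly sound. The concern you raise in (iii) about whether a $G$-equivariant self-map of $B_{[H]}$ preserves stabilisers \emph{exactly} is legitimate but resolves cleanly: for any $G$-equivariant $\tau$ one always has $G_x \le G_{(x)\tau}$, and since $x$ and $(x)\tau$ both lie in $B_{[H]}$ their stabilisers are conjugate to $H$ and hence of equal order, forcing $G_x = G_{(x)\tau}$. With that, the bijection in (iii) goes through and your derivation of (ii) from (iii) is fine (the paper instead appeals directly to Theorem~\ref{characterisation}, noting that the generalised inverse built there lies in $R$).

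Your part (iv), however, does not work. The monoid $R'$ you finally settle on --- allowing an arbitrary transformation of $B_{[G]}$ while keeping the stabiliser-preservation condition on the other boxes --- is \emph{equal} to $R$, not strictly larger. The point is that $B_{[G]}$ is precisely the set of constant configurations, each of which has stabiliser $G$; thus every self-map of $B_{[G]}$ already preserves stabilisers, and $\CA(B_{[G]}) \cong \Tran(A)$ is already a factor of $R$ by your own part (iii). Relaxing a constraint that was vacuous gains nothing. Your earlier attempts (injectivity on orbits, etc.) were abandoned for good reason, so as written the proposal contains no valid witness for non-maximality.

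The paper's argument for (iv) is different in spirit: rather than modifying $R$ on a single box, it adjoins to $R$ a specific $\tau \notin R$ that collapses across boxes. One picks $x,y \in A^G$ with $G_x \lneq G_y$, defines $\tau$ by $(xG)\tau = yG$, $(B_{[G_y]})\tau = yG$, and $\tau = \id$ elsewhere, and then checks directly that every element of $K := \langle R, \tau \rangle$ is regular via Theorem~\ref{characterisation}: any $\sigma \in K$ either lies in $R$ or factors as $\rho_1 \tau \rho_2$ with $\rho_2 \in R$, and in the latter case every configuration in the image of $\sigma$ has a preimage in its own box. The key idea you are missing is that to escape $R$ one must allow a map that \emph{strictly increases} some stabiliser, and the work is then to control the monoid this generates.
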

\begin{proof}
Part \textbf{(i)} and \textbf{(iii)} are trivial while part \textbf{(ii)} follows by Theorem \ref{characterisation}.

For part \textbf{(iv)}, let $x, y \in A^G$ be such that $G_x < G_y$, so $x$ and $y$ are in different boxes. Define $\tau \in \CA(G;A)$ such that $(x) \tau = y$, $(B_{[G_y]})\tau = yG$, and $\tau$ fixes any other configuration in $A^G \setminus (B_{[G_y]} \cup \{ xG\})$. It is clear by Theorem \ref{characterisation} that $\tau$ is regular. We will show that $K:=\langle R, \tau \rangle$ is a regular submonoid of $\CA(G;A)$. Let $\sigma \in K$ and $z \in (A^G)\sigma$. If $\sigma \in R$, then it is obviously regular, so assume that $\sigma = \rho_1 \tau \rho_2$ with $\rho_1 \in K$ and $\rho_2 \in R$. If $z \in A^G \setminus (B_{[G_y]})$, it is clear that $z$ has a preimage in its own box; otherwise $(B_{[G_y]}) \sigma = (yG) \rho_2 = zG$ and $z$ has a preimage in $B_{[G_y]}$. Hence $\sigma$ is regular and so is $K$. 
\end{proof}


\section{Regular linear cellular automata} \label{linear}

Let $V$ a vector space over a field $\mathbb{F}$. For any group $G$, the configuration space $V^G$ is also a vector space over $\mathbb{F}$ equipped with the pointwise addition and scalar multiplication. Denote by $\End_{\mathbb{F}}(V^G)$ the set of all $\mathbb{F}$-linear transformations of the form $\tau : V^G \to V^G$. Define
\[ \LCA(G;V) := \CA(G;V) \cap \End_{\mathbb{F}}(V^G). \]
Note that $\LCA(G;V)$ is not only a monoid, but also an $\mathbb{F}$-algebra (i.e. a vector space over $\mathbb{F}$ equipped with a bilinear binary product), because, again, we may equip $\LCA(G;V)$ with the pointwise addition and scalar multiplication. In particular, $\LCA(G;V)$ is also a ring. 

As in the case of semigroups, von Neumann regular rings have been widely studied and many important results have been obtained. In this chapter, we study the regular elements of $\LCA(G;V)$ under some natural assumptions on the group $G$. 

First, we introduce some preliminary results and notation. The \emph{group ring} $R[G]$ is the set of all functions $f : G \to R$ with finite support (i.e. the set $\{ g \in G : (g)f \neq 0 \}$ is finite). Equivalently, the group ring $R[G]$ may be defined as the set of all formal finite sums $\sum_{g \in G} a_g g$ with $a_g \in R$. The multiplication in $R[G]$ is defined naturally using the multiplications of $G$ and $R$:
\[ \sum_{g \in G} a_g g \sum_{h \in G} a_h h = \sum_{g, h \in G} a_g a_h  gh. \]
 If we let $R :=\End_{\mathbb{F}}(V)$, it turns out that $\End_{\mathbb{F}}(V)[G]$ is isomorphic to $\LCA(G;V)$ as $\mathbb{F}$-algebras (see \cite[Theorem 8.5.2]{CSC10}). 

Define the \emph{order} of $g \in G$ by $o(g) :=  \vert \langle g \rangle \vert$ (i.e. the size of the subgroup generated by $g$). The group $G$ is \emph{torsion-free} if the identity is the only element of finite order; for instance, the groups $\mathbb{Z}^d$, for $d \in \mathbb{N}$, are torsion-free groups. The group $G$ is \emph{elementary amenable} if it may be obtained from finte groups or abelian groups by a sequence of group extensions or direct unions.   

In the following theorem we characterise the regular linear cellular automata over fields and torsion-free elementary amenable groups (such as $\mathbb{Z}^d$, $d \in \mathbb{N}$).

\begin{theorem}\label{torsion-free}
Let $G$ be a torsion-free elementary amenable group and let $V = \mathbb{F}$ be any field. A non-zero element $\tau \in \LCA(G; \mathbb{F})$ is regular if and only if it is invertible.
\end{theorem}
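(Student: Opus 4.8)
The plan is to transfer the problem from $\LCA(G;\mathbb{F})$ to the group ring $\mathbb{F}[G]$ via the isomorphism $\LCA(G;\mathbb{F}) \cong \End_{\mathbb{F}}(\mathbb{F})[G] \cong \mathbb{F}[G]$ recorded in the excerpt. Under this identification, von Neumann regularity of $\tau$ as a monoid element coincides with von Neumann regularity of the corresponding element $a \in \mathbb{F}[G]$ as a ring element (the defining equation $\tau\sigma\tau = \tau$ is multiplicative), and invertibility of $\tau$ corresponds to $a$ being a unit of $\mathbb{F}[G]$. So the statement to prove becomes: in the group ring $\mathbb{F}[G]$ of a torsion-free elementary amenable group over a field, every non-zero von Neumann regular element is a unit.

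First I would reduce regularity of a single element to a statement about idempotents: if $a$ is regular, pick $b$ with $aba = a$, and then $e := ab$ and $f := ba$ are idempotents with $e\mathbb{F}[G] = a\mathbb{F}[G]$ and $\mathbb{F}[G]f = \mathbb{F}[G]a$ as right/left ideals. So the key point is that $\mathbb{F}[G]$ has no idempotents other than $0$ and $1$. This is exactly where the torsion-free elementary amenable hypothesis enters: Kaplansky's idempotent conjecture is known to hold for elementary amenable torsion-free groups — more precisely, for such $G$ the group ring $\mathbb{F}[G]$ has no nontrivial idempotents (this follows, e.g., from the fact that such groups satisfy the idempotent conjecture; it can be deduced from the work on the Bass/Kaplansky conjectures for elementary amenable groups, and in the amenable case the trace argument of Kaplansky together with torsion-freeness rules out idempotents $e$ with $0 \ne e \ne 1$). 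Granting that $\mathbb{F}[G]$ has only the trivial idempotents, from $aba = a$ with $a \ne 0$ we get $ab \in \{0,1\}$ and $ba \in \{0,1\}$; since $a \ne 0$ forces $ab \ne 0$ and $ba \ne 0$ (as $a(ba) = a \ne 0$), we conclude $ab = ba = 1$, so $a$ is a unit. Conversely every unit is trivially regular (take $\sigma = \tau^{-1}$), giving the other direction.

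The main obstacle is pinning down and correctly citing the fact that torsion-free elementary amenable groups satisfy Kaplansky's idempotent conjecture. This is not elementary: for general torsion-free groups it is open, and the elementary amenable case relies on deeper results (the idempotent conjecture is implied by the Bass conjecture / Atiyah-type $L^2$-arguments, and holds for elementary amenable torsion-free groups by results in the literature on these conjectures). I would isolate this as a cited lemma of the form ``if $G$ is torsion-free elementary amenable and $\mathbb{F}$ a field, then $\mathbb{F}[G]$ has no idempotents other than $0$ and $1$,'' and then the remainder of the argument — the reduction of regularity to idempotents and the deduction that a non-zero regular element is a unit — is short and self-contained. One should also double-check the edge case that a non-zero regular element cannot have $ab = 0$: indeed $aba = a$ and $ab = 0$ give $a = (ab)a = 0$, a contradiction, so this is fine.
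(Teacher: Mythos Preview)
Your argument is correct, but the paper takes a slightly different (and marginally shorter) route. After passing to $\mathbb{F}[G]$, the paper rewrites $\tau\sigma\tau=\tau$ as $\tau(\sigma\tau-1)=0$ and then invokes not the idempotent conjecture but the stronger \emph{zero-divisor} conjecture: by \cite[Theorem~1.4]{KLM88}, $\mathbb{F}[G]$ is a domain whenever $G$ is torsion-free elementary amenable, so $\sigma\tau=1$ (and then $\tau\sigma=1$ follows since $\tau\sigma$ is idempotent in a domain). Your approach via idempotents is perfectly valid and in principle needs a weaker input---only that $\mathbb{F}[G]$ has no nontrivial idempotents rather than no zero-divisors---which is known for a broader class of groups. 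For the class at hand, though, both facts come from the same source: the no-zero-divisor result of \cite{KLM88} immediately gives the idempotent statement you need (since $e(1-e)=0$), so your citation worries can be resolved by pointing there rather than to Bass/Atiyah-type arguments. In short: same destination, you detour through idempotents where the paper goes straight through zero-divisors.
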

\begin{proof}
It is clear that any invertible element is regular. Let $\tau \in \LCA(G;\mathbb{F})$ be non-zero regular. In this case, $\End_{\mathbb{F}}(\mathbb{F}) \cong \mathbb{F}$, so $\LCA(G;\mathbb{F}) \cong \mathbb{F}[G]$. By definition, there exists $\sigma \in \LCA(G;V)$ such that $\tau \sigma \tau = \tau$. As $\LCA(G;V)$ is a ring, the previous is equivalent to
\[ \tau (\sigma\tau - 1) = 0, \]
where $1 = 1e$ and $0=0e$ are the identity and zero endomorphisms, respectively. Since $\tau \neq 0$, either $\sigma\tau - 1 = 0$, in which case $\tau$ is invertible, or $\sigma \tau - 1$ is a zero-divisor. However, it was established in \cite[Theorem 1.4]{KLM88} that $\mathbb{F}[G]$ has no zero-divisors whenever $G$ is a torsion-free elementary amenable group. Hence, $\tau$ is invertible.
\end{proof}

The argument of the previous result works as long as the group ring $\mathbb{F}[G]$ has no zero-divisor. This is connected with the well-known Kaplansky's conjecture which states that $\mathbb{F}[G]$ has no zero-divisors when $G$ is a torsion-free group.    

The \emph{characteristic} of a field $\mathbb{F}$, denoted by $\Char(\mathbb{F})$, is the smallest $k \in \mathbb{N}$ such that 
\[ \underbrace{1+1+\cdots+1}_{k \text{ times}} = 0 ,\]
where $1$ is the multiplicative identity of $\mathbb{F}$. If no such $k$ exists we say that $\mathbb{F}$ has characteristic $0$.

A group $G$ is \emph{locally finite} if every finitely generated subgroup of $G$ is finite; in particular, the order of every element of $G$ is finite. Examples of such groups are finite groups and infinite direct sums of finite groups.

\begin{theorem}\label{regular-ring}
Let $G$ be a group and let $V$ be a finite-dimensional vector space over $\mathbb{F}$. Then, $\LCA(G;V)$ is regular if and only if $G$ is locally finite and $\Char(\mathbb{F}) \nmid o(g)$, for all $g \in G$.
\end{theorem}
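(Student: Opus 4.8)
The plan is to reduce the regularity of $\LCA(G;V)$ to a question about the group ring $R[G]$ where $R = \End_{\mathbb{F}}(V) \cong M_m(\mathbb{F})$ with $m = \dim_{\mathbb{F}} V$, using the isomorphism $\LCA(G;V) \cong \End_{\mathbb{F}}(V)[G]$ recalled from \cite[Theorem 8.5.2]{CSC10}. So the statement becomes: $M_m(\mathbb{F})[G]$ is a von Neumann regular ring if and only if $G$ is locally finite and $\Char(\mathbb{F}) \nmid o(g)$ for all $g \in G$. Since $M_m(\mathbb{F})[G] \cong M_m(\mathbb{F}[G])$, and matrix rings over a ring are von Neumann regular if and only if the base ring is, this further reduces to: $\mathbb{F}[G]$ is von Neumann regular if and only if $G$ is locally finite and $\Char(\mathbb{F}) \nmid o(g)$ for all $g \in G$. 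This last equivalence is a classical theorem, essentially due to Auslander, McLaughlin, Connell and Villamayor; I would cite the relevant source (e.g.\ Connell's characterisation of von Neumann regular group rings, or Villamayor's theorem). Thus the whole proof is a sequence of standard reductions plus one citation.

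First I would set $m := \dim_{\mathbb{F}} V$ and invoke $\LCA(G;V) \cong \End_{\mathbb{F}}(V)[G]$ together with $\End_{\mathbb{F}}(V) \cong M_m(\mathbb{F})$. Next I would use the ring isomorphism $M_m(\mathbb{F})[G] \cong M_m(\mathbb{F}[G])$, which holds because both sides are the free $\mathbb{F}$-algebra on $G$ tensored with $M_m(\mathbb{F})$; concretely a formal sum $\sum_g A_g g$ with $A_g \in M_m(\mathbb{F})$ corresponds to the matrix whose $(i,j)$ entry is $\sum_g (A_g)_{ij} g$. Then I would quote the fact that for any ring $S$, the matrix ring $M_m(S)$ is von Neumann regular if and only if $S$ is von Neumann regular (Morita invariance of regularity; this is elementary and standard). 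Finally I would apply the classical characterisation of regular group algebras: $\mathbb{F}[G]$ is von Neumann regular if and only if $G$ is locally finite and the order of every element of $G$ is invertible in $\mathbb{F}$, i.e.\ $\Char(\mathbb{F}) \nmid o(g)$ for all $g \in G$. Combining these four facts yields the theorem.

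The main obstacle is not really a mathematical difficulty here but a matter of having the right reference for the last step, since that equivalence packages together several results: local finiteness is forced because an infinite finitely generated subgroup would give a non-Noetherian, non-regular situation (indeed $\mathbb{F}[\mathbb{Z}]$ is a polynomial-type ring which is not regular), and the characteristic condition is forced because if $\Char(\mathbb{F}) = p$ divides $o(g)$, then $\mathbb{F}[\langle g \rangle]$ contains a nonzero nilpotent (a $p$-th root of the augmentation-type idempotent obstruction, concretely $(1-h)$ for $h$ of order $p$ satisfies $(1-h)^p = 1 - h^p = 0$ up to the binomial vanishing mod $p$), and regular rings are reduced. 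Conversely, when $G$ is locally finite with all element orders invertible, Maschke's theorem applied to each finite subgroup makes $\mathbb{F}[H]$ semisimple for every finitely generated $H \le G$, and a directed union of semisimple (hence regular) subrings through which every element and every equation factors is regular. I would either reproduce this short argument for the $\mathbb{F}[G]$ case directly or cite it; given the paper's style of citing \cite{KLM88} and \cite{CSC10} for analogous facts, a clean citation to the standard group-ring literature is the natural choice, with a one-line indication of why each direction holds.
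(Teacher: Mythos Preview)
Your proposal is correct and rests on the same key citation the paper uses, namely the classical characterisation of von Neumann regular group rings due to Auslander, McLaughlin and Connell. The only difference is in the reduction: you pass through $M_m(\mathbb{F})[G] \cong M_m(\mathbb{F}[G])$ and then invoke Morita invariance of regularity to reduce to the field case $\mathbb{F}[G]$, whereas the paper applies Connell's theorem directly with coefficient ring $R = \End_{\mathbb{F}}(V) \cong M_{n\times n}(\mathbb{F})$, checking that this $R$ is regular and that each $o(g)$ is a unit in it. The paper's route is one step shorter because Connell's result already allows arbitrary coefficient rings, but your extra reduction is harmless and equally valid.
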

\begin{proof}
By \cite[Theorem 3]{C63} (see also \cite{A,ML}), we have that a group ring $R[G]$ is regular if and only if $R$ is regular, $G$ is locally finite and $o(g)$ is a unit in $R$ for all $g \in G$. In the case of $\LCA(G;V) \cong \End_{\mathbb{F}}(V)[G]$, since $\dim(V) := n < \infty$, the ring $R:=\End_{\mathbb{F}}(V) \cong M_{n\times n}(\mathbb{F})$ is regular (see \cite[Theorem 1.7]{G79}. The condition that $o(g)$, seen as the matrix $o(g) I_n$, is a unit in $M_{n\times n}(\mathbb{F})$ is satisfied if and only if $o(g)$ is nonzero in $\mathbb{\mathbb{F}}$, which is equivalent to $\Char(\mathbb{F}) \nmid o(g)$, for all $g \in G$. 
\end{proof}

\begin{corollary}
Let $G$ be a group and let $V$ be a finite-dimensional vector space over a field $\mathbb{F}$ of characteristic $0$. Then, $\LCA(G;V)$ is regular if and only if $G$ is locally finite. 
\end{corollary}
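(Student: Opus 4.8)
The plan is to derive this directly from Theorem \ref{regular-ring}, since the corollary is simply the specialisation to fields of characteristic $0$. First I would observe that the "only if" direction is immediate: if $\LCA(G;V)$ is regular, then by Theorem \ref{regular-ring} the group $G$ is locally finite (and additionally $\Char(\mathbb{F}) \nmid o(g)$ for all $g$, but we discard that conclusion here). Conversely, suppose $G$ is locally finite and $\Char(\mathbb{F}) = 0$. I would then verify the remaining hypothesis of Theorem \ref{regular-ring}, namely that $\Char(\mathbb{F}) \nmid o(g)$ for every $g \in G$. Since $G$ is locally finite, each $\langle g \rangle$ is finite, so $o(g)$ is a positive integer; and since $\Char(\mathbb{F}) = 0$, no positive integer is divisible by the characteristic (this is exactly the statement that $\mathbb{Z} \hookrightarrow \mathbb{F}$, i.e. $n \cdot 1_{\mathbb{F}} \neq 0$ for all $n \geq 1$). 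Hence the divisibility condition holds vacuously, and Theorem \ref{regular-ring} gives that $\LCA(G;V)$ is regular.

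Concretely, the write-up would be two or three sentences: invoke Theorem \ref{regular-ring}; note that "$\Char(\mathbb{F}) \nmid o(g)$ for all $g \in G$" is automatically true when $\Char(\mathbb{F}) = 0$ because $o(g) \in \mathbb{N}$ is finite (using local finiteness) and no nonzero natural number is a multiple of $0$; then conclude the equivalence collapses to "$G$ locally finite." I would phrase the divisibility remark carefully, since "$k \mid m$" with $k = 0$ means $m = 0$, and $o(g) \geq 1$, so indeed $0 \nmid o(g)$ always.

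There is essentially no obstacle here: the corollary is a trivial consequence of the preceding theorem, and the only thing to be careful about is the bookkeeping convention for divisibility by $0$ and the (harmless) use of local finiteness to guarantee $o(g)$ is a well-defined finite number in the first place. If one wanted the statement to make sense even without assuming local finiteness a priori in the hypothesis, one could note that in characteristic $0$ the condition "$o(g)$ is a unit in $\mathbb{F}$" already forces $o(g)$ to be finite for the group ring to be regular, but since the corollary already assumes $G$ locally finite this subtlety does not arise.

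\begin{proof}
By Theorem \ref{regular-ring}, $\LCA(G;V)$ is regular if and only if $G$ is locally finite and $\Char(\mathbb{F}) \nmid o(g)$ for all $g \in G$. Since $\Char(\mathbb{F}) = 0$, and since $o(g)$ is a positive integer for every $g \in G$ whenever $G$ is locally finite, the condition $\Char(\mathbb{F}) \nmid o(g)$ holds automatically for all $g \in G$ (no nonzero natural number is a multiple of $0$). Hence, under the standing assumption $\Char(\mathbb{F}) = 0$, the characterisation of Theorem \ref{regular-ring} reduces to: $\LCA(G;V)$ is regular if and only if $G$ is locally finite.
\end{proof}
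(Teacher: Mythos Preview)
Your proposal is correct and matches the paper's approach: the corollary is stated in the paper without proof, as an immediate specialisation of Theorem~\ref{regular-ring} to the case $\Char(\mathbb{F})=0$, and your argument spells out exactly this specialisation.
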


Henceforth, we focus on the regular elements of $\LCA(G;V)$ when $V$ is a one-dimensional vector space (i.e. $V$ is just the field $\mathbb{F}$). In this case, $\End_{\mathbb{F}}(\mathbb{F}) \cong \mathbb{F}$, so $\LCA(G;\mathbb{F})$ and $\mathbb{F}[G]$ are isomorphic as $\mathbb{F}$-algebras. 

A non-zero element $a$ of a ring $R$ is called \emph{nilpotent} if there exists $n > 0$ such that $a^n = 0$. The following basic result will be quite useful in the rest of this section.

\begin{lemma}\label{nilpotent}
Let $R$ be a commutative ring. If $a \in R$ is nilpotent, then $a$ is not a regular element.
\end{lemma}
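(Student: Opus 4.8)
The plan is to prove the contrapositive in the strongest convenient form: if $a \in R$ is nilpotent and $aba = a$ for some $b \in R$, then $a = 0$. Since $R$ is commutative, once we have $aba = a$ we may freely rearrange products, which is the whole point of the hypothesis.

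\textbf{Main argument.} Suppose $a$ is nilpotent, say $a^n = 0$ with $n$ minimal (or just $n \geq 1$). Assume toward a contradiction that $a$ is regular, so there is $b \in R$ with $aba = a$. Using commutativity, rewrite this as $a^2 b = a$, i.e. $a(ab - 1) = 0$; more usefully, I would iterate: from $a = a^2 b$ we get $a = a^2 b = (a^2 b) a b = a^3 b^2$, and inductively $a = a^{k} b^{k-1}$ for every $k \geq 1$. Taking $k = n$ gives $a = a^{n} b^{n-1} = 0 \cdot b^{n-1} = 0$, contradicting that a nilpotent element is by definition non-zero. Hence no such $b$ exists and $a$ is not regular.

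\textbf{Alternative phrasing of the same step.} If one prefers not to set up the induction explicitly, note that $e := ab$ satisfies $ea = a$ and $e^2 = abab = a(ba)b$; multiplying $aba = a$ on the right by $b$ gives $abab = ab$, so $e = ab$ is idempotent. Then $a = ea = e^2 a = \cdots = e^n a$, but $e^n = (ab)^n = a^n b^n = 0$, so again $a = 0$. Either route is a two-line computation; I would present the idempotent version since it also makes transparent why commutativity is exactly what is needed (to pull the powers of $a$ out of $(ab)^n$).

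\textbf{Expected obstacle.} There is essentially no obstacle here — the only thing to be careful about is the convention, already fixed in the excerpt, that "nilpotent" excludes $0$, so that the conclusion "$a$ is not regular" is not vacuous; and the fact that regularity of $a$ in a (unital) ring via $aba = a$ is equivalent to the semigroup notion, so invoking the ring structure of $\LCA(G;V)$ later is legitimate. The computation itself uses nothing beyond associativity, commutativity, and the defining equation.
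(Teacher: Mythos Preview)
Your proof is correct and follows essentially the same approach as the paper: both use commutativity to rewrite $aba = a$ as $a^2 b = a$ and then exploit nilpotence. The only cosmetic difference is that the paper multiplies once by $a^{n-2}$ to obtain $a^{n-1} = 0$ and contradicts the minimality of $n$, whereas you iterate to $a = a^n b^{n-1} = 0$ and contradict $a \neq 0$; these are equivalent two-line arguments.
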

\begin{proof}
Let $R$ be a commutative ring and $a \in R$ a nilpotent element. Let $n >0$ be the smallest integer such that $a^n = 0$. Suppose $a$ is a regular element, so there is $x \in R$ such that $axa=a$. By commutativity, we have $a^2 x = a$. Multiplying both sides of this equation by $a^{n-2}$ we obtain $0 = a^{n}  x = a^{n-1}$, which contradicts the minimality of $n$. 
\end{proof}

\begin{example}
Suppose that $G$ is a finite abelian group and let $\mathbb{F}$ be a field such that $\Char(\mathbb{F}) \mid \vert G \vert$. By Theorem \ref{regular-ring}, $\LCA(G;\mathbb{F})$ must have elements that are not regular. For example, let $s := \sum_{g \in G} g \in \mathbb{F}[G]$. As $sg = s$, for all $g \in G$, and $\Char(\mathbb{F}) \mid \vert G \vert$, we have $s^2 = \vert G \vert s = 0$. Clearly, $\mathbb{F}[G]$ is commutative because $G$ is abelian, so, by Lemma \ref{nilpotent}, $s$ is not a regular element. 
\end{example}

We finish this section with the special case when $G$ is the cyclic group $\mathbb{Z}_n$ and $\mathbb{F}$ is a finite field with $\Char(\mathbb{F}) \mid n$. By Theorem \ref{regular-ring}, not all the elements of $\LCA(\mathbb{Z}_n; \mathbb{F})$ are regular, so how many of them are there? In order to count them we need a few technical results about commutative rings.

An \emph{ideal} $I$ of a commutative ring $R$ is a subring such that $rb  \in I$ for all $r \in R$, $b \in I$. For any $a \in R$, the \emph{principal ideal} generated by $a$ is the ideal $\langle a \rangle := \{ ra : r \in R \}$. A ring is called \emph{local} if it has a unique maximal ideal. 

Denote by $\mathbb{F}[x]$ the ring of polynomials with coefficients in $\mathbb{F}$. When $G \cong \mathbb{Z}_n$, we have the following isomorphisms as $\mathbb{F}$-algebras:
\[ \LCA(\mathbb{Z}_n;\mathbb{F}) \cong \mathbb{F}[\mathbb{Z}_n] \cong \mathbb{F}[x] / \langle x^n -1 \rangle, \]
where $\langle x^n -1 \rangle$ is a principal ideal in $\mathbb{F}[x]$.

\begin{theorem} 
Let $n \geq 2$ be an integer, and let $\mathbb{F}$ be a finite field of size $q$ such that $\Char(\mathbb{F}) \mid n$. Consider the following factorization of $x^n - 1$ into irreducible elements of $\mathbb{F}[x]$: 
\[ x^n -1 = p_1(x)^{m_1} p_2(x)^{m_2} \dots p_r(x)^{m_r}. \]
For each $i = 1, \dots r$, let $d_i := \deg(p_i(x))$. Then, the number of regular elements in $\LCA(\mathbb{Z}_n; \mathbb{F})$ is exactly
\[ \prod_{i=1}^{r} \left( ( q^{d_i} - 1) q^{d_i (m_i-1)} +1 \right). \]
\end{theorem}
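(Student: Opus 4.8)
<br>

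\textbf{Proof proposal.}
The plan is to push the problem through the ring isomorphism $\LCA(\mathbb{Z}_n;\mathbb{F}) \cong \mathbb{F}[x]/\langle x^n-1\rangle$ recorded above, split the quotient by the Chinese Remainder Theorem, and count in each local factor. Since the $p_i(x)$ are pairwise non-associate irreducibles in the principal ideal domain $\mathbb{F}[x]$, the powers $p_1(x)^{m_1},\dots,p_r(x)^{m_r}$ are pairwise coprime, so
\[ \LCA(\mathbb{Z}_n;\mathbb{F}) \;\cong\; \mathbb{F}[x]/\langle x^n-1\rangle \;\cong\; \prod_{i=1}^{r} R_i, \qquad R_i := \mathbb{F}[x]/\langle p_i(x)^{m_i}\rangle, \]
as $\mathbb{F}$-algebras. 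An element $(a_1,\dots,a_r)$ of a finite product of rings satisfies $aba=a$ for some $b$ in the product if and only if each $a_i$ satisfies $a_ib_ia_i=a_i$ for some $b_i$, so regularity is detected componentwise and the number of regular elements of $\LCA(\mathbb{Z}_n;\mathbb{F})$ equals $\prod_{i=1}^r N_i$, where $N_i$ denotes the number of regular elements of $R_i$. Thus it suffices to prove $N_i = (q^{d_i}-1)q^{d_i(m_i-1)}+1$ for each $i$.

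Next I would analyse a single factor $R_i$. Because $\mathbb{F}[x]$ is a PID, the ideals of $R_i$ are exactly $\langle p_i(x)^{j}\rangle$ for $0\le j\le m_i$; these form a chain, so $R_i$ is a finite local ring with unique maximal ideal $\mathfrak{m}_i=\langle p_i(x)\rangle$ and residue field $R_i/\mathfrak{m}_i\cong \mathbb{F}[x]/\langle p_i(x)\rangle$ of size $q^{d_i}$. I claim the regular elements of $R_i$ are precisely the units together with $0$. Indeed, any unit $a$ satisfies $a a^{-1} a = a$, and $0$ is trivially regular; conversely, if $0\neq a\in\mathfrak{m}_i$, then $a=p_i(x)c$ for some $c\in R_i$, hence $a^{m_i}\in\langle p_i(x)^{m_i}\rangle = 0$, so $a$ is nilpotent, and since $R_i$ is commutative Lemma \ref{nilpotent} shows $a$ is not regular. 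Therefore $N_i = |R_i^{\times}| + 1$.

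Finally I would count the units. As an $\mathbb{F}$-vector space $R_i$ has dimension $d_i m_i$, so $|R_i| = q^{d_i m_i}$, while the non-units form the maximal ideal $\mathfrak{m}_i$, which has index $q^{d_i}$ in $R_i$ and hence cardinality $q^{d_i(m_i-1)}$. Thus $|R_i^{\times}| = q^{d_i m_i} - q^{d_i(m_i-1)} = (q^{d_i}-1)q^{d_i(m_i-1)}$, giving $N_i = (q^{d_i}-1)q^{d_i(m_i-1)}+1$, and the claimed formula follows by taking the product over $i$.

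I expect the only step needing genuine care to be the identification of the regular elements of the local ring $R_i$: specifically, that every element of $\mathfrak{m}_i$ is nilpotent (so Lemma \ref{nilpotent} applies) and that nothing outside $R_i^{\times}\cup\{0\}$ is regular. Everything else — the coprimality and CRT splitting, the componentwise detection of regularity in a product, and the cardinality bookkeeping — is routine. (One could alternatively argue that in a commutative ring $a$ is regular iff $\langle a\rangle$ is generated by an idempotent, and a local ring has only the idempotents $0$ and $1$; but the nilpotency argument via Lemma \ref{nilpotent} is the most economical route here.)
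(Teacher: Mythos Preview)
Your proposal is correct and follows essentially the same route as the paper: both pass through $\LCA(\mathbb{Z}_n;\mathbb{F})\cong\mathbb{F}[x]/\langle x^n-1\rangle$, split by the Chinese Remainder Theorem, observe that each factor is a finite local ring whose nonzero non-units are nilpotent, invoke Lemma~\ref{nilpotent} to conclude the regular elements are exactly $R_i^\times\cup\{0\}$, and then count units via $|R_i|=q^{d_im_i}$ and $|\mathfrak{m}_i|=q^{d_i(m_i-1)}$. The only cosmetic difference is that the paper phrases the cardinality computation via the Third Isomorphism Theorem rather than vector-space dimension.
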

\begin{proof}
Recall that
\[ \LCA(\mathbb{Z}_n;\mathbb{F}) \cong \mathbb{F}[x] / \langle x^n -1 \rangle. \]
By the Chinese Remainder Theorem,
\[ \mathbb{F}[x] / \langle x^n - 1 \rangle \cong \mathbb{F}[x]/ \langle p_1(x)^{m_1} \rangle \times \mathbb{F}[x]/ \langle p_2(x)^{m_2} \rangle \times \dots \times \mathbb{F}[x]/ \langle p_r(x)^{m_r} \rangle. \]
An element $a = (a_1, \dots, a_r)$ in the right-hand side of the above isomorphism is a regular element if and only if $a_i$ is a regular element in $\mathbb{F}[x]/ \langle p_i(x)^{m_i} \rangle$ for all $i = 1, \dots,r$. 

Fix $m := m_i$, $p(x) = p_i(x)$, and $d:=d_i$. Consider the principal ideals $A := \langle p(x) \rangle$ and $B := \langle p(x)^m \rangle$ in $\mathbb{F}[x]$. Then, $\mathbb{F}[x]/B$ is a local ring with unique maximal ideal $A/B$, and each of its nonzero elements is either nilpotent or a unit (i.e. invertible): in particular, the set of units of $\mathbb{F}[x]/B$ is precisely $(\mathbb{F}[x]/B) - (A/B)$. By the Third Isomorphism Theorem, $(\mathbb{F}[x]/B)/(A/B) \cong (\mathbb{F}[x]/A)$, so 
\[ \vert A/B \vert = \frac{\vert \mathbb{F}[x]/B \vert}{\vert \mathbb{F}[x]/A\vert} = \frac{q^{dm}}{q^d} = q^{d(m-1)}. \]
Thus, the number of units in $\mathbb{F}[x]/B$ is 
\[ \vert (\mathbb{F}[x]/B) - (A/B) \vert = q^{dm} - q^{d(m-1)} = (q^d - 1)q^{d(m-1)}. \] 

As nilpotent elements are not regular by Lemma \ref{nilpotent}, every regular element of $\mathbb{F}[x]/ \langle p_i(x)^{m_i} \rangle$ is zero or a unit. Thus, the number of regular elements in $\mathbb{F}[x]/ \langle p_i(x)^{m_i} \rangle$ is $( q^{d_i} - 1) q^{d_i (m_i-1) } +1$.  
\end{proof}

\subsubsection*{Acknowledgments.} We thank the referees of this paper for their insightful suggestions and corrections. In particular, we thank the first referee for suggesting the references \cite{A,C63,ML}, which greatly improved the results of Section \ref{linear}.

\Addresses


\begin{thebibliography}{4}
\bibitem{A} Auslander, M.: On regular group rings. Proc. Amer. Math. Soc. \textbf{8}, 658 -- 664 (1957). 

\bibitem{CRG16a} Castillo-Ramirez, A., Gadouleau, M.: Ranks of finite semigroups of one-dimensional cellular automata. Semigroup Forum \textbf{93}, 347--362 (2016).

\bibitem{CRG16b} Castillo-Ramirez, A., Gadouleau, M.: On Finite Monoids of Cellular Automata. In: Cook, M., Neary, T. (eds.) Cellular Automata and Discrete Complex Systems. LNCS \textbf{9664}, pp. 90--104, Springer International Publishing Switzerland (2016).

\bibitem{CRG16c} Castillo-Ramirez, A., Gadouleau, M.: Cellular Automata and Finite Groups. Preprint: arXiv:1610.00532 (2016).

\bibitem{CSC10} Ceccherini-Silberstein, T., Coornaert, M.: Cellular Automata and Groups. Springer Monographs in Mathematics, Springer-Verlag Berlin Heidelberg (2010).

\bibitem{CP61} Clifford,  A.H., Preston, G. B.: The Algebraic Theory of Semigroups, Volume 1. Mathematical Surveys of the American Mathematical Society \textbf{7}, Providence, R.I. (1961).

\bibitem{C63} Connell, I.G.: On the Group Ring. Canad. J. Math. \textbf{15}, 650 -- 685 (1963).

\bibitem{D58} Drazin, M.P.: Pseudo-Inverses in Associative Rings and Semigroups. Amer. Math. Mon. \textbf{65}, 506--514 (1958).

\bibitem{G79} Goodearl, K. R.: Von Neumann Regular Rings. Monographs and Studies in Mathematics \textbf{4}. Pitman Publishing Ltd. (1979).

\bibitem{H66} Howie, J. M.: The Subsemigroup Generated by the Idempotents of a Full Transformation Semigroup. J. London Math. Soc. \textbf{s1-41}, 707--716 (1966).

\bibitem{Ka05} Kari, J.: Theory of cellular automata: A Survey. Theoret. Comput. Sci. \textbf{334}, 3--33 (2005).

\bibitem{KLM88} Kropholler, P.H., Linnell, P.A., Moody, J.A.: Applications of a new $K$-theoretic theorem to soluble group rings. Proc. Amer. Math. Soc. \textbf{104} (3), 675--684 (1988).

\bibitem{ML} McLaughlin, J.E.: A note on regular group rings. Michigan Math. J. \textbf{5}, 127--128 (1958). 

\bibitem{ZZ12} Zhang, K., Zhang, L.: Generalized Reversibility of Cellular Automata with Boundaries. Proceedings of the 10th World Congress on Intelligent Control and Automation, Beijing, China (2012).

\bibitem{ZZ15} Zhang, K., Zhang, L.: Generalized Reversibility of Topological Dynamical Systems and Cellular Automata. J. Cellular Automata \textbf{10}, 425--434 (2015).
\end{thebibliography}
\end{document}